\newcommand{\size}[1]{\left|#1\right|} 
\newcommand{\pr}{^\prime}
\newcommand{\ts}{\thickspace}
\newcommand{\sym}[1]{\text{Sym}(#1)}
\newcommand{\ket}[1]{|#1\rangle}
\newcommand{\bra}[1]{\langle #1|}
\newcommand{\ketbra}[2]{\ket{#1}\!\bra{#2}}        
\newcommand{\extwk}{\widehat{W}^{(k)}}
\newcommand{\R}{\mathcal{R}}
\newcommand{\Z}{\mathcal{Z}}
\newcommand{\cel}{\text{Cel}}
\newcommand{\matv}{\text{Mat}_V}
\newcommand{\tr}{\text{tr}}
\declaretheorem[name=Definition]{definition}
\declaretheorem[name=Lemma]{lemma}
\declaretheorem[name=Theorem]{theorem}
\begin{document}

\title{Cellular Algebras and Graph Invariants Based on Quantum Walks}
\author{Jamie Smith}
\date{March 1, 2011}
\address{Institute for Quantum Computing and Department of Combinatorics \&
Optimization \\University of Waterloo\\ Waterloo, ON \\Canada\\ N2L 3G1}
\email{ja5smith@iqc.ca}

\begin{abstract} We consider two graph invariants inspired by quantum walks---one in continuous time (\cite{John-King-Gamble:2010lq}) and one in discrete time (\cite{Emms:2005fr,Godsil:2010zr}).  We will associate a matrix algebra called a {\em cellular algebra} with every graph.  We show that, if the cellular algebras of two graphs have a similar structure, then they are not distinguished by either of the proposed invariants.  
\end{abstract}

\maketitle

\section{Introduction}

In this paper, we consider two graph invariants.  Both invariants are inspired by quantum walks---one in continuous time (\cite{John-King-Gamble:2010lq}) and one in discrete time (\cite{Emms:2005fr,Godsil:2010zr}).  Interestingly, the algorithm defined by each invariant is classical; they are inspired by quantum phenomena, but don't require quantum computers.

In Section \ref{ca}, we will explain how to associate a matrix algebra with any graph.  These algebras, called {\em cellular algebras} encode useful structural information about the underlying graph.  In Sections \ref{emms} and \ref{gamble}, we will show that the invariant of Emms et al. as well as that of Gamble et al. can be described in terms of cellular algebras.  In fact, we will show that, if the cellular algebras arising from two non-isomorphic graphs have a similar structure, then these invariants will fail to distinguish them.

Emms et al. (\cite{Emms:2005fr}) consider the discrete time quantum walk on the directed arcs of the graph $G$.  The walk is governed by the transition matrix $U$, defined as follows
$$U_{wx,uv}=\begin{cases}\frac{2}{\deg(v)}&v=w,u\neq x\\\frac{2}{\deg(v)}-1&v=w,u=x\\0&o.w.\end{cases}$$
Thy then define the {\em positive support} of a matrix:
$$S^+(M)_{uv}=\begin{cases}1&M_{uv}>0\\0&o.w.\end{cases}$$
In the papers of Godsil and Guo, and of Emms et al., they consider the matrix
$$S^+(U^3)$$
In particular, they consider the spectrum of this matrix.  Clearly the spectrum is a graph invariant.  Emms et al. conjecture that it is strong enough to distinguish any pair of strongly regular graphs.  We derive a sufficient condition for two graphs to be indistinguishable by this invariant.  This result, proved in section \ref{emms} is stated as follows:

\begin{restatable}{theorem}{emmscospec} Let $G$ and $G\pr$ be 2-equivalent graphs with transition matrices $U$ and $\overline U$.  Then,
$$\text{spec}(S^+(U^3))=\text{spec}(S^+(\overline U^3))$$
\end{restatable}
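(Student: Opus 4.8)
The plan is to realize everything inside the cellular algebra attached to $G$ and then transport it to $G\pr$ via the structure-preserving isomorphism that $2$-equivalence provides. The first step is to show that the transition matrix $U$---and therefore each power $U^k$---belongs to the cellular algebra of $G$ acting on the arc set. The entries of $U$ are completely determined by the consecutiveness condition $v=w$, by the coincidence $u=x$ that separates the value $2/\deg(v)$ from $2/\deg(v)-1$, and by the degrees $\deg(v)$; these are exactly the data recorded by the cells of the ($2$-dimensional) cellular structure, so $U$ is a linear combination of the standard basis matrices $A_i$ of that algebra. Since $2$-equivalence supplies a weak isomorphism $\phi$---a linear bijection between the two cellular algebras that sends the basis matrix $A_i$ of $G$ to the corresponding basis matrix $\overline A_i$ of $G\pr$ and preserves all structure constants---and since $U$ and $\overline U$ have the same coefficients in their respective bases, we get $\phi(U)=\overline U$. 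As $\phi$ is multiplicative, $\phi(U^3)=\overline U^{\,3}$.

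Next I would deal with the positive support, which is the one non-algebraic ingredient. Writing $U^3=\sum_i c_i A_i$ in the standard basis, the matrices $A_i$ have disjoint $0$/$1$ supports, so on the support of $A_i$ every entry of $U^3$ equals the single scalar $c_i$. Consequently the positive support simply selects the basis matrices with positive coefficient,
$$S^+(U^3)=\sum_{c_i>0}A_i,$$
which in particular lies again in the cellular algebra. Because $\phi$ matches basis matrices and preserves the coefficients $c_i$, the same computation in $G\pr$ gives $\overline U^{\,3}=\sum_i c_i\overline A_i$ and hence $S^+(\overline U^{\,3})=\sum_{c_i>0}\overline A_i=\phi\bigl(S^+(U^3)\bigr)$. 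Thus $\phi$ intertwines the two positive supports.

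Finally I would invoke the spectral rigidity of weak isomorphisms. A cellular algebra is a semisimple matrix $*$-algebra, and both its Wedderburn block sizes and the multiplicities with which those blocks occur in the standard representation are determined by the structure constants. Hence for any element $M=\sum_i c_i A_i$ the characteristic polynomial in the standard representation is a fixed function of the $c_i$ and the structure constants, so $M$ and $\phi(M)$ have identical spectra with multiplicities. Applying this to $M=S^+(U^3)$, whose image is $S^+(\overline U^{\,3})$, yields $\text{spec}(S^+(U^3))=\text{spec}(S^+(\overline U^{\,3}))$.

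The main obstacle is the first step rather than the last two, which are essentially formal. I expect the real work to be in constructing the arc-level cellular algebra, verifying that $U$ genuinely lies inside it, and checking that $2$-equivalence matches $U$ with $\overline U$ on the nose---that is, that the degree data and the two arc-incidence conditions defining $U$ are refinements of the cells rather than finer than them. Once $\phi(U)=\overline U$ is secured, the constancy of an algebra element on each cell makes $S^+$ behave linearly on the basis, and the standard fact that weak isomorphisms preserve characteristic polynomials closes the argument.
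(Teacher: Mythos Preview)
Your proposal is correct and follows essentially the same route as the paper: show that $U$ lies in the $2$-extension $\widehat{[G]}^{(2)}$, use the weak isomorphism $\widehat\phi$ furnished by $2$-equivalence to obtain $\widehat\phi(U)=\overline U$ and hence $\widehat\phi(S^+(U^3))=S^+(\overline U^{\,3})$ via the basis-relation decomposition, and then invoke the fact that weak isomorphisms preserve spectra. The only cosmetic differences are that the paper makes the membership $U\in\widehat{[G]}^{(2)}$ explicit via cylindric relations and the degree-cell projectors $I_{T_d}$, and proves cospectrality by trace preservation plus a Vandermonde argument rather than by appealing to the Wedderburn structure.
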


In \cite{John-King-Gamble:2010lq}, Gamble et al. consider the application of a multi-paritcle quantum walks to the graph isomorphism problem.  Depending on the particular nature of the particles and their interaction, we assign a Hamiltonian $H$ to a graph $G$.  The unitary operator describing the system is then given by 
$$U=U(t)=e^{-itH}$$

The Green's functions associated with this graph at some time $t$ are the values $$\mathcal{G}(i,j)=\bra{j}U\ket{i}$$ where $i$ and $j$ run over an appropriate basis (in the case of \cite{John-King-Gamble:2010lq}, this is the two-particle basis).  The invariant of Gamble et al. is the set of Green's functions.

Gamble et al. focus on the case of interacting pairs of Bosons.  They test this invariant on all tabulated pairs of non-isomorphic strongly regular graphs up to 64 vertices, finding that it does indeed distinguish them.  They ask whether, for a high enough value of $k$, the $k$-Boson walk could distinguish all non-isomorphic graphs.  For any $k$, we describe a sufficient condition for a pair of graphs to be indistinguishable by the $k$-Boson walk.  This result, which will be described in more depth in Section \ref{gamble}, is summed up as follows:

\begin{restatable}{theorem}{kequivkbos}\label{kequivkbos}If $G$ and $G\pr$ are $k$-equivalent graphs, then they are not distinguished by the interacting $k$-Boson quantum walk.\end{restatable}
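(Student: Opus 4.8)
The plan is to realize the interacting $k$-Boson walk entirely inside the $k$-dimensional cellular algebra of $G$, so that the hypothesis of $k$-equivalence — which supplies an isomorphism of these algebras — can be pushed through the matrix exponential and down to the Green's functions. First I would write the governing operator explicitly. For $k$ interacting Bosons the dynamics live on the symmetric subspace of $(\mathbb{C}^V)^{\otimes k}$, and the Hamiltonian decomposes as a sum of single-particle hopping terms $\sum_i A^{(i)}$ (tensor factors of the adjacency matrix) together with a diagonal interaction term recording coincidences of particles on a common vertex, the whole operator being compressed to the Bosonic (symmetric) sector. The first step is to check that each of these pieces — the symmetrized adjacency action, the coincidence/interaction term, and the projector onto the symmetric subspace — is a canonical relation of the graph on $k$-tuples, and hence lies in the $k$-dimensional cellular algebra $W^{(k)}(G)$ produced by the construction of Section \ref{ca}. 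Since all of them are definable purely from the graph's incidence data, they are refined by the $k$-dimensional Weisfeiler--Leman closure, so $H \in W^{(k)}(G)$ and, as the algebra is closed under the convergent power series defining the exponential, $U = e^{-itH} \in W^{(k)}(G)$ as well.

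Next I would invoke $k$-equivalence. By definition it furnishes an algebra isomorphism $\phi \colon W^{(k)}(G) \to W^{(k)}(G\pr)$ carrying each standard cell-matrix to the corresponding one, hence preserving structure constants and cell sizes. Because $H$ and the Hamiltonian $\overline H$ of $G\pr$ are built from corresponding canonical relations with identical coefficients, $\phi(H) = \overline H$. As $\phi$ respects products, linear combinations and the identity, it commutes with the matrix exponential, giving $\phi(U) = \phi(e^{-itH}) = e^{-it\phi(H)} = e^{-it\overline H} = \overline U$ for every $t$.

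Finally I would translate this back to the invariant. The Green's functions are the entries $\bra{j}U\ket{i}$ of $U$ in the $k$-particle basis. Since $U$ lies in the cellular algebra, its entries are constant on each cell, and $\phi$ preserves both the value a cell carries and the number of positions lying in that cell. Therefore the multiset of Green's function values for $G$ and for $G\pr$ agree at every time $t$, so the collection of Green's functions cannot separate the two graphs, which is exactly the assertion of Theorem \ref{kequivkbos}.

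The crux — and the step I expect to be most delicate — is the first one: verifying that the interacting Bosonic Hamiltonian genuinely sits inside $W^{(k)}(G)$. One must treat the passage to the symmetric subspace with care, confirming that the projector onto the Bosonic sector and the interaction term are among the relations stabilized by the $k$-dimensional closure, rather than finer invariants that $k$-equivalence need not preserve; pinning down that $\phi$ matches the exact coefficients of $H$ and $\overline H$ is precisely where the definition of $k$-equivalence from Section \ref{ca} has to be used.
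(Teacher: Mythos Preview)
Your proposal is correct and follows essentially the same route as the paper: Lemma~\ref{HtoHk} verifies that each piece of $H_{kB}$ (the permutation operators $S$, the interaction terms $R_i$, and $A^{\oplus k}$) lies in the $k$-extension and is preserved by $\widehat\phi$, and Lemma~\ref{indistinguishable} then pushes this through the exponential to the Green's functions exactly as you outline. The paper is more concrete at the step you flag as delicate, showing that each $S$ and each $R_i$ is a cylindric relation built from $\{I,J,J-I\}$ and invoking Lemma~\ref{cylindric2}, rather than appealing to Weisfeiler--Leman definability in the abstract; you should also say ``basis relation'' rather than ``cell'' when arguing that the entries of $U$ are constant on classes.
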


It is known that pairs of $k$-equivalent graphs exist for all $k$.  Such constructions are outlined in \cite{Ponomarenko:2009vn}.  These constructions are not, in general strongly regular.  The existence of strongly regular pairs of $k$-equivalent graphs remains an open question, even when restricted to $k=2$.  While finding such a pair would be a significant result, proving that no strongly regular $k$-equivalent pair exists would be a significant step toward an algorithm for the isomorphism problem in strongly regular graphs.

\section{Cellular Algebras}\label{ca}


Cellular algebras are a generalization of coherent configurations, which were developed by Weisfeiler and Lehman (\cite{B.Yu.-Weisfeiler:1968mz}) and Higman (\cite{Higman:1970ve}) as an approach to the graph isomorphism problem.  We will see how these algebras are generated from graphs;  these generated cellular algebras capture structural information about the underlying graph that we will use to prove the main theorem of this paper.

Let $V$ be a finite vertex set.  Then, $\matv$ is the algebra of all complex-valued matrices indexed by $V$.  We define a cellular algebra as follows:

\begin{definition} Let $W$ be a subalgebra of $\matv$.  Then, $W$ is a cellular algebra if the following hold:
\begin{enumerate}[(i)]
\item $W$ is closed under Hadamard multiplication $\circ$.
\item $W$ is closed under complex conjugation $^\dagger$.
\item $W$ contains the identity $I$ and the all-ones matrix $J$.
\end{enumerate}
\end{definition}

Then, if $W$ is a cellular algebra,
\begin{enumerate}[(i)]
\item $W$ has a unique basis of $0-1$ matrices $\R$, and $\sum_{R\in\R}R=J$.
\item There is a subset $C\subseteq\R$ such that $\sum_{R\in\R}R=I$.
\item If $R\in\R$, then $R^\dagger\in\R$
\end{enumerate}

We call $\R$ the set of {\bf basis relations}\footnote{The term {\em basis relation} comes from an alternative  definition, in which we consider binary relations on $V$, rather than the equivalent $0-1$ matrices.} of $W$.  We will also use $\R^*$ to denote the set of all sums of elements of $\R$; this is the set of {\bf relations} of $W$.  A set of vertices $U\subseteq V$ is called a {\bf cell} of $W$ if $I_U$, the identity on $U$, is a basis relation of $W$.  The set of cells of $W$ is denoted by $\cel(W)$.

\subsection{The Cellular Closure: Cellular Algebras from Graphs}\label{secsrg}

The cellular algebra $W=[M_1,...,M_\ell]$ is the smallest cellular algebra containing $\{M_1,...,M_\ell\}$, a set of $n\times n$ matrices.  We say that $W$ is {\bf generated} by $\{M_1,...,M_\ell\}$.  If $G=(V,E)$ is a graph with adjacency matrix $A$, then we say that $[A]$ is the {\bf cellular closure} of $G$.  This will sometimes be denoted by $[G]$.  If a cellular algebra $W$ contains $[G]$, then we will say that $W$ contains the graph $G$.  Strongly regular graphs have the simplest cellular closures:

\begin{definition}\label{srg}
A {\bf strongly regular graph} $G$ is associated with a set of parameters $(n,k,\lambda,\mu)$ such that:
\begin{enumerate}[(i)]
\item $G$ has $n$ vertices.
\item Each vertex has degree $k$.
\item Each pair of adjacent vertices share $\lambda$ common neighbours.
\item Each pair of non-adjacent vertices share $\mu$ common neighbours.
\end{enumerate}
\end{definition}

If a strongly regular graph $G$ has adjacency matrix $A$, it is easily verified that $\{I, A, (J-I-A)\}$ form the basis for the cellular algebra $[A]$.

The following lemma shows that the cells of $[G]$ distinguish vertices based on (among other characteristics) their degree:
\begin{restatable}{lemma}{degreecell}\label{degreecell}  If $\deg{u}\neq\deg{v}$, then $u$ and $v$ are in different cells of $[G]$.\end{restatable}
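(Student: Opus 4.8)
The plan is to exhibit a single explicit matrix in $[G]$ whose diagonal records the degree of each vertex, and then to invoke the fact that the diagonal of \emph{any} matrix in a cellular algebra is constant on each cell. Since the cells of $[G]$ partition $V$, showing that $\deg$ is a cell-invariant immediately yields the contrapositive of the lemma.

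First I would establish the key structural fact: for any $M\in[G]$, the diagonal entry $M_{uu}$ depends only on the cell containing $u$. Recall that $[G]$ has a basis $\R$ of $0$--$1$ matrices with $\sum_{R\in\R}R=J$, and a subset $C\subseteq\R$ of diagonal basis relations with $\sum_{R\in C}R=I$; each $R\in C$ is of the form $I_U$ for a cell $U$, and these cells partition $V$. Because the full sum equals $J$ while the sum over $C$ accounts for exactly the diagonal entries $I$, every basis relation lying outside $C$ must vanish on the diagonal. Hence, writing $M=\sum_{R\in\R}c_R R$, we get $M_{uu}=\sum_{R\in C}c_R R_{uu}=c_{I_U}$, where $U$ is the cell containing $u$. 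In particular $M_{uu}=M_{vv}$ whenever $u$ and $v$ share a cell.

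Next I would construct the degree-detecting element. Since $[G]$ is a subalgebra of $\matv$ containing both $A$ and $J$, it contains the ordinary product $AJ$, and $(AJ)_{uv}=\sum_{w}A_{uw}J_{wv}=\sum_{w}A_{uw}=\deg(u)$ for every $v$. Using closure under Hadamard multiplication together with $I\in[G]$, the matrix $D=(AJ)\circ I$ also lies in $[G]$; it is diagonal with $D_{uu}=\deg(u)$. Applying the structural fact to $M=D$ shows that if $u$ and $v$ lie in the same cell then $\deg(u)=D_{uu}=D_{vv}=\deg(v)$. Contrapositively, $\deg(u)\neq\deg(v)$ forces $u$ and $v$ into different cells, which is the claim.

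The step requiring the most care is the structural fact in the second paragraph, namely verifying that every basis relation outside $C$ is off-diagonal so that $M_{uu}$ really is determined by the single coefficient $c_{I_U}$; the remaining manipulations ($AJ$ recording row sums, and Hadamard-restriction to the diagonal) are routine applications of the cellular-algebra axioms.
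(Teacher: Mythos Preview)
Your proof is correct and follows essentially the same idea as the paper's: both arguments exhibit a matrix in $[G]$ whose diagonal entries are the vertex degrees and then use that the diagonal of any element of $[G]$ is constant on each cell. The only cosmetic difference is that the paper uses $A^2$ (since $(A^2)_{uu}=\deg(u)$) rather than your $(AJ)\circ I$, and leaves the ``constant on cells'' step implicit in the basis-relation expansion of $A^2$.
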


In other words, if $I_d$ is the identity on all vertices of degree $d$, and $J_d$ is the all-ones matrix on vertices of degree $d$, then $I_d, J_d\in [G]$.

While most generated cellular algebras are not as straightforward as this, the Weisfeiler-Lehman algorithm (see \cite{B.Yu.-Weisfeiler:1968mz}, \cite{Evdokimov:1999qf}) calculates the cellular closure of a set of matrices in polynomial time.

\subsection{Weak and Strong Isomorphisms}

We will define two notions of isomorphisms between cellular algebras--- one of a combinatorial nature (strong), and the other of an algebraic nature (weak).  Let $W$ and $W\pr$ be cellular algebras with vertex sets $V$ and $V\pr$ and basis relations $\R$ and $\R\pr$, respectively. 

\begin{definition} A {\bf weak isomorphism} is a bijection $\phi\colon W\to W\pr$ that preserves addition, matrix multiplication, Hadamard multiplication and complex conjugation.
\end{definition}
Two immediate consequences of this definition are that $\phi(I)=I$, and $\phi$ is a bijection from the basis relations $\R$ to $\R\pr$.  We also note that $\phi$ induces a bijection between cells, $\phi\pr\colon \cel(W)\to\cel(W\pr)$.  Weak isomorphisms respect algebraic structure, but do not take into account the vertices underlying the cellular algebra.  We now define a stronger notion of isomorphism.

\begin{definition} A {\bf strong isomorphism} is a bijection $\psi\colon V\to V\pr$ such that, for each $R\in W$, there is a unique $R\pr\in W\pr$ such that
$$\forall u,v\in V,\: R(u,v)=R\pr(\psi(u),\psi(v)).$$
\end{definition}

The following lemmas will be very useful when considering the graph invariants of sections \ref{emms} and \ref{gamble}.  Their proofs can be found in Appendix A.

\begin{restatable}{lemma}{celltocell}\label{celltocell}
Take $X\in\cel(W)$.  Then $\size{X}=\size{\phi\pr(X)}$.\end{restatable}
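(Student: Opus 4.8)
The plan is to find an expression, built only from the operations that a weak isomorphism is required to preserve (addition, matrix multiplication, Hadamard multiplication, and complex conjugation), whose value encodes the cardinality $\size{X}$. The natural candidate, the trace of $I_X$, is unavailable since trace is not among these operations; instead I would sandwich $I_X$ between copies of the all-ones matrix $J$. First I would record the basic fact that $\phi$ fixes $J$: since $J=\sum_{R\in\R}R$ and $\phi$ restricts to a bijection $\R\to\R\pr$ preserving addition, we have $\phi(J)=\sum_{R\pr\in\R\pr}R\pr=J\pr$, the all-ones matrix on $V\pr$. I would also recall that, by the definition of the induced map on cells, $\phi(I_X)=I_{\phi\pr(X)}$.

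The key step is the identity $J\,I_X\,J=\size{X}\,J$. This is a direct entrywise computation: $(I_X J)_{u,w}=1$ exactly when $u\in X$, so premultiplying by $J$ sums over the rows of $I_X J$, giving every entry of $J I_X J$ the common value $\size{X}$. Applying $\phi$ and using that it preserves matrix multiplication,
\[
\phi(J I_X J)=\phi(J)\,\phi(I_X)\,\phi(J)=J\pr\,I_{\phi\pr(X)}\,J\pr=\size{\phi\pr(X)}\,J\pr,
\]
where the last equality is the same computation carried out in $W\pr$.

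On the other hand, additivity of $\phi$ gives $\phi(\size{X}\,J)=\size{X}\,\phi(J)=\size{X}\,J\pr$, since $\size{X}$ is a positive integer and so $\size{X}\,J$ is an $\size{X}$-fold sum of $J$ with itself. Equating the two expressions for $\phi(J I_X J)$ yields $\size{X}\,J\pr=\size{\phi\pr(X)}\,J\pr$, and cancelling the nonzero matrix $J\pr$ gives $\size{X}=\size{\phi\pr(X)}$. The only real subtlety is that scalar multiplication is not listed among the preserved operations, so I must be careful to extract the scalar $\size{X}$ through additivity (valid for the integer $\size{X}$) rather than assuming $\phi$ is linear over $\mathbb{C}$; the entrywise identity $J I_X J=\size{X}\,J$ is exactly the device that reduces everything to that integer multiple.
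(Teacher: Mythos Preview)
Your proof is correct. The paper's argument differs in a small but notable way: instead of sandwiching $I_X$ between copies of the global all-ones matrix $J$, it works with the \emph{local} all-ones matrix $J_X$ supported on $X\times X$. The paper first establishes that $\phi(J_X)=J_{\phi\pr(X)}$ by arguing that $\phi$ carries the basis relations supported on $X\times X$ onto those supported on $\phi\pr(X)\times\phi\pr(X)$, and then reads off $\size{X}$ from the identity $J_X^2=\size{X}\,J_X$ and its image $J_{\phi\pr(X)}^2=\size{\phi\pr(X)}\,J_{\phi\pr(X)}$. Your route via $J\,I_X\,J=\size{X}\,J$ sidesteps the need to identify $\phi(J_X)$ and uses only the immediate fact $\phi(J)=J\pr$, making it a touch more economical. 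Your care about extracting the integer scalar $\size{X}$ through additivity rather than assumed $\mathbb{C}$-linearity is a nice point; the paper's argument requires the same move (at $\phi(\size{X}\,J_X)=\size{X}\,\phi(J_X)$) but leaves it implicit.
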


\begin{restatable}{lemma}{treq}\label{treq}
For all $R\in W$, $\tr(R)=\tr(\phi(R))$.
\end{restatable}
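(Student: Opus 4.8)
The plan is to reduce the statement to the basis relations $\R$ and then to observe that only the ``diagonal'' basis relations contribute to the trace. Every $R\in W$ decomposes as $R=\sum_{R_i\in\R}c_iR_i$, where the $R_i$ have disjoint supports and $R$ is constant on each; since $\phi$ preserves addition and both $\tr$ and $\phi$ distribute over this decomposition, it suffices to prove $\tr(R_i)=\tr(\phi(R_i))$ for each individual $R_i\in\R$. (One subsidiary point here is that $\phi$ respects scalar multiples of basis relations, so that the coefficients $c_i$ are carried across unchanged; I would justify this from additivity together with the Hadamard identity $R\circ R_i=c_iR_i$, which recovers the $R_i$-coefficient.)

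Next I would classify the basis relations using the distinguished subset $C\subseteq\R$ with $\sum_{R\in C}R=I$. Because the elements of $\R$ are $0$--$1$ matrices with disjoint supports summing to $J$, each $R\in C$ must have all of its $1$'s on the diagonal, so $R=I_X$ for a cell $X$ and $\tr(R)=\size{X}$. Comparing diagonals in $\sum_{R\in\R}R=J$ with $\sum_{R\in C}R=I$ then forces every $R\in\R\setminus C$ to vanish on the diagonal, so $\tr(R)=0$ for those. This splits $\R$ cleanly into diagonal (cell) relations and off-diagonal relations.

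I would then transport this classification across $\phi$. Since $\phi(I)=I$ and $\phi$ preserves Hadamard multiplication, the characterization $R\in C\iff R\circ I=R$ is preserved, so $\phi$ restricts to a bijection $C\to C\pr$; that is, a basis relation is diagonal exactly when its image is. For a diagonal relation $R=I_X$ this identifies $\phi(I_X)=I_{\phi\pr(X)}$, where $\phi\pr$ is the induced bijection on cells, so that Lemma~\ref{celltocell} gives $\tr(\phi(R))=\size{\phi\pr(X)}=\size{X}=\tr(R)$. For an off-diagonal $R\in\R\setminus C$, the image $\phi(R)$ is again off-diagonal, whence $\tr(\phi(R))=0=\tr(R)$, and summing over the decomposition finishes the argument.

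The main obstacle is the stability of ``diagonality'' under $\phi$: one must check not merely that $\phi$ maps $C$ onto $C\pr$, but that $\phi(I_X)$ is precisely the cell identity $I_{\phi\pr(X)}$ rather than some unrelated diagonal relation, so that the induced map $\phi\pr$ is the one matching the cell sizes. Pinning this down requires the interplay of $\phi(I)=I$, Hadamard-preservation, and the definition of $\phi\pr$; once it is in place, Lemma~\ref{celltocell} supplies the equal cell sizes and the remainder is bookkeeping.
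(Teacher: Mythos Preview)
Your proposal is correct and follows essentially the same approach as the paper: both arguments identify the diagonal part of $R$ with the cell identities $I_X$, use that $\phi$ sends $I_X$ to $I_{\phi\pr(X)}$, and then invoke Lemma~\ref{celltocell} to equate the cell sizes. The only difference is organizational---you first decompose $R$ over the basis $\R$ and case-split on diagonal versus off-diagonal relations, whereas the paper works with arbitrary $R$ directly via the Hadamard identity $R\circ I_X=q_R(X)\,I_X$ and writes $\tr(R)=\sum_X q_R(X)\,\size{X}$ in one stroke---but the content is the same.
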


\begin{restatable}{lemma}{cospectral}\label{cospectral} Let $\phi:W\to W\pr$ be a weak isomorphism and take any $A\in W$ with $\phi(A)=A\pr$.  Then, $A$ and $A\pr$ are cospectral.\end{restatable}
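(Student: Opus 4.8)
The plan is to reduce cospectrality to the equality of the traces of all matrix powers, and then to exploit the fact that a weak isomorphism preserves matrix multiplication together with Lemma \ref{treq}.

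First I would observe that, by definition, $\phi$ preserves matrix multiplication, and (as noted immediately after the definition of weak isomorphism) $\phi(I)=I$. Hence for every nonnegative integer $k$ we have $\phi(A^k)=\phi(A)^k=(A')^k$, reading $A^0=I$. Applying Lemma \ref{treq} to the element $A^k\in W$ then yields
$$\tr(A^k)=\tr(\phi(A^k))=\tr((A')^k)$$
for all $k\geq 0$. In particular the case $k=0$ gives $\size{V}=\tr(I)=\tr(\phi(I))=\size{V'}$, so that $A$ and $A'$ are square matrices of a common size $n$; this is needed for the word ``cospectral'' to carry meaning.

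Next I would invoke the standard fact that over $\mathbb{C}$ the power sums $p_k=\tr(A^k)=\sum_i\lambda_i^k$ determine the characteristic polynomial. Concretely, Newton's identities express the elementary symmetric functions $e_1,\dots,e_n$ of the eigenvalues recursively in terms of $p_1,\dots,p_n$, and this recursion is invertible in characteristic zero; the characteristic polynomial is then $\sum_{j=0}^n(-1)^j e_j x^{\,n-j}$. Since $\tr(A^k)=\tr((A')^k)$ for $1\leq k\leq n$, the matrices $A$ and $A'$ share all their power sums, hence all their elementary symmetric functions, hence the same characteristic polynomial. Therefore $A$ and $A'$ have identical eigenvalues with identical multiplicities, i.e.\ they are cospectral.

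The genuinely nontrivial input is Lemma \ref{treq}: it is precisely the preservation of traces by a purely algebraic bijection that makes matrix multiplication and the trace interact in the way the argument requires. Everything else is formal. The only step that demands a caveat is the passage from power sums to the characteristic polynomial via Newton's identities, which is routine over $\mathbb{C}$ but would fail in positive characteristic; since $W\subseteq\matv$ consists of complex matrices, no such difficulty arises.
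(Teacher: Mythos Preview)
Your argument is correct and shares the same engine as the paper's: both deduce $\tr(A^k)=\tr((A')^k)$ for all $k$ from Lemma~\ref{treq} and the multiplicativity of $\phi$. The difference lies only in how each converts ``equal power sums'' into ``equal spectrum with multiplicity.'' You invoke Newton's identities directly to recover the elementary symmetric functions, hence the full characteristic polynomial. The paper instead first observes that $\phi$ preserves the minimal polynomial (so $A$ and $A'$ have the same \emph{distinct} eigenvalues $\lambda_1,\dots,\lambda_\ell$), and then solves the linear system $\sum_j(m_j-\overline m_j)\lambda_j^k=0$ via the nonvanishing Vandermonde determinant to match multiplicities. Your route is a bit more streamlined, since it skips the separate minimal-polynomial step; the paper's route is perhaps more self-contained, relying only on the Vandermonde determinant rather than the Newton recursion. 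Both are standard, and your observation that the $k=0$ case forces $|V|=|V'|$ is a nice point the paper leaves implicit.
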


\subsection{Cellular Algebra Extensions}
The $k$-extension of a cellular algebra $W$ is a larger algebra that contains additional structural information about $W$.  Before constructing the $k$-extension, we first need to define the centralizer algebra.
\begin{definition}
Let $G$ be a group acting on the set $S$.  The {\bf centralizer algebra} is defined as follows:
$$\Z(G,S)=\{A\in\text{Mat}_S\colon \forall g\in G,\;A^g=A\}$$
\end{definition}
In the next definition, we use the centralizer algebra $\Z(\sym{V},V^k)$.  In this case, $\sym{V}$ acts entrywise on $V^k$.
\begin{definition}
The {\bf $k$-extension} $\extwk$ of a cellular algebra $W$ is the smallest cellular algebra containing $W^k$ and $\Z(\sym{V},V^k)$:
$$\extwk=[W^k,\Z(\sym{V},V^k)]$$
\end{definition}
The following lemma is reproduced from \cite{Ponomarenko:2009vn}, and was originally proven in \cite{Evdokimov:1999qf}:
\begin{lemma}\label{cylindric1} Let $S=\{R_{i,j}\colon 1\leq i,j\leq k\}\subseteq \R^*$ be a set of relations.  Define the cylindric relation $\text{Cyl}_S$ such that, given $\overline{x},\overline{y}\in V^k$,
$$\text{Cyl}_S(\overline{x},\overline{y})=\prod_{i,j}R_{i,j}(x_i,y_j).$$
Then, $\text{Cyl}_S\in\extwk$.
\end{lemma}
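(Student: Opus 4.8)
The plan is to show that the cylindric relation $\text{Cyl}_S$ lies in $\extwk$ by exhibiting it as a Hadamard product of relations that are already known to belong to the $k$-extension. Recall that $\extwk=[W^k,\Z(\sym{V},V^k)]$ contains both the $k$-fold tensor power structure coming from $W^k$ and the full centralizer algebra $\Z(\sym{V},V^k)$, and that by closure property (i) a cellular algebra is closed under Hadamard multiplication $\circ$. So the strategy is to write $\text{Cyl}_S$ as a Hadamard product indexed by the pairs $(i,j)$, where each factor is a matrix in $\extwk$ that records exactly the condition $R_{i,j}(x_i,y_j)$.

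\emph{First} I would fix a pair $(i,j)$ and define, for each relation $R_{i,j}\in\R^*$, a matrix $M_{i,j}$ on $V^k$ by the rule $M_{i,j}(\overline{x},\overline{y})=R_{i,j}(x_i,y_j)$; this matrix ignores all coordinates of $\overline x,\overline y$ except the $i$-th of the source and the $j$-th of the target. The defining product in the lemma is then precisely the entrywise product $\text{Cyl}_S=\bigcirc_{i,j}M_{i,j}$, so once each $M_{i,j}\in\extwk$ is established, Hadamard-closure of the cellular algebra finishes the argument immediately.

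\emph{Second}, and this is where the real content lies, I must verify that each $M_{i,j}$ belongs to $\extwk$. The natural route is to interpret $M_{i,j}$ as arising from $R_{i,j}$ viewed in the appropriate coordinate slot of $W^k$, together with an averaging or ``spreading'' over the remaining coordinates that is governed by the centralizer algebra $\Z(\sym{V},V^k)$. Concretely I would try to realize $M_{i,j}$ as a product (ordinary matrix product) or Hadamard combination of a lifted copy of $R_{i,j}$ acting in slots $(i,j)$ with elements of $\Z(\sym{V},V^k)$ that enforce equality/freedom in the other coordinates; since both ingredients are generators of $\extwk=[W^k,\Z(\sym{V},V^k)]$ and a cellular algebra is closed under all the relevant operations, the result stays inside $\extwk$. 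A clean way to see that the single-slot matrix $M_{i,j}$ is available is to note that projecting onto the $i$-th source coordinate and the $j$-th target coordinate while being indifferent to the rest is exactly the kind of symmetry encoded by $\Z(\sym{V},V^k)$, so composing the coordinate projections (from the centralizer algebra) with the embedded relation $R_{i,j}$ (from $W^k$) produces $M_{i,j}$.

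\emph{The main obstacle} I anticipate is precisely this second step: making rigorous how an individual two-coordinate condition $R_{i,j}(x_i,y_j)$ is extracted inside $\extwk$ without inadvertently constraining the other coordinates. The tension is that $W^k$ naturally carries relations that couple the $i$-th source to the $i$-th target (diagonal slots), whereas the lemma needs arbitrary off-diagonal couplings $R_{i,j}(x_i,y_j)$ with $i\neq j$ and simultaneous independence in the untouched coordinates. Resolving this requires careful bookkeeping to confirm that the generators of the centralizer algebra supply enough ``reindexing'' power to move a relation from slot $(i,i)$ to slot $(i,j)$ and to leave the remaining $2(k-1)$ coordinates unconstrained; once that bookkeeping is in place, the Hadamard product assembly of the first step is purely formal.
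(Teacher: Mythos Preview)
The paper does not supply its own proof of this lemma: it is quoted from \cite{Ponomarenko:2009vn} and attributed to \cite{Evdokimov:1999qf}. So there is nothing to compare your argument against; I can only assess whether your plan would go through.

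Your overall architecture is the standard one and is sound: writing $\text{Cyl}_S=\bigcirc_{i,j}M_{i,j}$ with $M_{i,j}(\overline x,\overline y)=R_{i,j}(x_i,y_j)$ and then invoking Hadamard closure is exactly how the result is usually proved. The diagonal case $i=j$ is immediate, since $M_{i,i}=J\otimes\cdots\otimes R_{i,i}\otimes\cdots\otimes J\in W^k$.

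The gap is that you stop short of actually producing the off-diagonal $M_{i,j}$ for $i\neq j$; you describe this as the ``main obstacle'' and say it requires ``careful bookkeeping,'' but you do not do that bookkeeping. Without it the proof is incomplete, because nothing you have written yet shows the centralizer algebra really furnishes the needed reindexing. The missing step is short once you see it: for a transposition $\pi=(i\;j)\in\sym{k}$, the permutation matrix $P_\pi$ on $V^k$ defined by $P_\pi(\overline x,\overline z)=\prod_l[z_l=x_{\pi(l)}]$ is $\sym{V}$-invariant, hence lies in $\Z(\sym{V},V^k)\subseteq\extwk$. Then with $N_j:=J\otimes\cdots\otimes R_{i,j}\otimes\cdots\otimes J\in W^k$ (the relation placed in slot $j$), one checks directly that $P_\pi N_j=M_{i,j}$, since left multiplication by $P_\pi$ relabels the row coordinate $x_j$ as $x_i$ while the $J$ factors leave the other coordinates unconstrained. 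That one-line computation is the content you are missing; once it is inserted, your Hadamard assembly finishes the proof.
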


Taking a set of relations $T=\{L_{i,j}\colon 1\leq i<j\leq k\}\subseteq\mathcal{R}^*$, we say that $\overline{x}\in V^k$ is of type $T$ if $(x_i,x_j)\in L_{i,j}$ for all $1\leq i<j\leq k$.  As an immediate consequence of the theorem above, we get the following lemma:
\begin{restatable}{lemma}{typet} \label{typet}Let $T=\{L_{i,j}\colon 1\leq i,j\leq k\}$ and let $I_T$ be the identity relation restricted to elements of $V^k$ of type $T$.  Then,
$$I_T\in \extwk$$
\end{restatable}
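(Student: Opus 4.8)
The plan is to realize $I_T$ as a single cylindric relation and then invoke Lemma \ref{cylindric1}. Concretely, I would construct a set $S=\{R_{i,j}\colon 1\leq i,j\leq k\}\subseteq\R^*$ so that $\text{Cyl}_S$ simultaneously forces the two defining conditions of $I_T$: that its argument $(\overline{x},\overline{y})$ lies on the diagonal, i.e. $\overline{x}=\overline{y}$, and that $\overline{x}$ is of type $T$.

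To encode the diagonal condition I would set the diagonal relations $R_{i,i}=I$, the identity relation; this is a legitimate choice since $I\in\R^*$ by consequence (ii) of the cellular algebra axioms, and the factor $R_{i,i}(x_i,y_i)$ equals $1$ exactly when $x_i=y_i$, so the product over all $i$ pins down $\overline{x}=\overline{y}$. To encode the type condition I would set $R_{i,j}=L_{i,j}$ for $i<j$, which is allowed because each $L_{i,j}\in\R^*$ by hypothesis. Finally, for $i>j$ I would set $R_{i,j}=J$, the all-ones relation, which lies in $\R^*$ since $J=\sum_{R\in\R}R$; these factors then impose no constraint.

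The verification is a direct expansion of the product formula. Since every $R_{i,j}$ is a $0$--$1$ matrix, $\text{Cyl}_S(\overline{x},\overline{y})=\prod_{i,j}R_{i,j}(x_i,y_j)$ equals $1$ precisely when every factor is $1$. The diagonal factors force $x_i=y_i$ for all $i$, and once $\overline{x}=\overline{y}$ holds, the upper-triangular factors $L_{i,j}(x_i,y_j)=L_{i,j}(x_i,x_j)$ are exactly the requirement that $(x_i,x_j)\in L_{i,j}$ for all $i<j$, i.e. that $\overline{x}$ is of type $T$. Hence $\text{Cyl}_S=I_T$, and Lemma \ref{cylindric1} yields $I_T=\text{Cyl}_S\in\extwk$.

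The step requiring the most care is the realization that a single cylindric relation can capture both constraints at once. The apparent mismatch is that ``type'' is a condition on pairs $(x_i,x_j)$ drawn from one tuple, whereas $\text{Cyl}_S$ compares entries of $\overline{x}$ against entries of $\overline{y}$. The resolution, and the only real idea here, is that forcing $R_{i,i}=I$ collapses $\overline{y}$ onto $\overline{x}$, after which $R_{i,j}(x_i,y_j)$ and $L_{i,j}(x_i,x_j)$ coincide. I would also note the harmless indexing discrepancy, namely that the type is specified only for $i<j$, which is absorbed by the choice $R_{i,j}=J$ below the diagonal.
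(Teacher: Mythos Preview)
Your proposal is correct and follows essentially the same route as the paper: realize $I_T$ as a single cylindric relation with $R_{i,i}=I$ on the diagonal and $R_{i,j}=L_{i,j}$ above it, then invoke Lemma~\ref{cylindric1}. The only difference is below the diagonal, where the paper sets $R_{i,j}=L_{j,i}^{T}$ (so each type constraint is imposed twice, redundantly) while you set $R_{i,j}=J$; once the diagonal entries force $\overline{x}=\overline{y}$, either choice yields $\text{Cyl}_S=I_T$, and your version is arguably the cleaner of the two.
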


A proof can be found in Appendix A.

\section{The $k$-Equivalence of Graphs} \label{kequiv}

We would like to use these ideas of weak isomorphism and $k$-extension to draw meaningful connections between graphs.

\begin{definition} Let $G$ and $G\pr$ be graphs with adjacency matrices $A$ and $A\pr$.  Then, $G$ and $G\pr$ are {\bf equivalent} if there is a weak isomorphism $\phi\colon [G]\to[G\pr]$ such that $\phi(A)=A\pr$.  We say that $\phi$ is an equivalence from $G$ to $G\pr$.\end{definition}

Since $[G]$ is the smallest cellular algebra containing $A$, the weak isomorphism $\phi$ is in fact uniquely determined.  

\begin{lemma} Let $\phi$ is an equivalence from $G$ to $G\pr$ and $I_d$ and $I_d\pr$ be the identity operators on vertices of degree $d$.  Then,
$$\phi(I_d)=I_d\pr$$
\end{lemma}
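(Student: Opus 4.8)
The plan is to show that the degree of a vertex is captured by a specific element of $[G]$ that $\phi$ is forced to preserve, and then to assemble $I_d$ from the cell identities. First I would form the \emph{degree matrix}
$$D = (AJ)\circ I.$$
Since $[G]$ is closed under matrix multiplication we have $AJ\in[G]$, and since it is closed under Hadamard multiplication and contains $I$ we get $D\in[G]$. A direct computation shows that $D$ is the diagonal matrix with $D(u,u)=\deg(u)$. Because $\phi$ is a bijection of basis relations that preserves addition, $\phi(J)=J\pr$ (as $J=\sum_{R\in\R}R$ and $J\pr=\sum_{R\pr\in\R\pr}R\pr$); combined with $\phi(A)=A\pr$, $\phi(I)=I$, multiplicativity, and preservation of $\circ$, this gives
$$\phi(D)=(\phi(A)\phi(J))\circ\phi(I)=(A\pr J\pr)\circ I = D\pr,$$
the degree matrix of $G\pr$.

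Next I would show that the induced cell bijection $\phi\pr\colon\cel([G])\to\cel([G\pr])$ preserves degree. By Lemma \ref{degreecell} all vertices in a single cell $U$ share a common degree, which I denote $\deg(U)$, so $D\circ I_U=\deg(U)\,I_U$. The subtle point is that a weak isomorphism is only assumed to preserve addition, not complex scalar multiplication, so I cannot simply pull the scalar $\deg(U)$ through $\phi$. Since $\deg(U)$ is a non-negative integer, however, $\deg(U)\,I_U$ is literally the sum of $\deg(U)$ copies of $I_U$, so additivity gives $\phi(\deg(U)\,I_U)=\deg(U)\,I_{\phi\pr(U)}$. On the other hand, applying $\phi$ to $D\circ I_U$ and using $\phi(D)=D\pr$ together with preservation of $\circ$ yields $D\pr\circ I_{\phi\pr(U)}=\deg(\phi\pr(U))\,I_{\phi\pr(U)}$. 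Equating the two expressions and comparing a nonzero entry of $I_{\phi\pr(U)}$ gives $\deg(\phi\pr(U))=\deg(U)$.

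Finally I would assemble the result. By Lemma \ref{degreecell} the vertices of degree $d$ form a union of cells, so $I_d=\sum_{\deg(U)=d}I_U$, and applying $\phi$ termwise gives
$$\phi(I_d)=\sum_{\deg(U)=d}I_{\phi\pr(U)}.$$
Since $\phi\pr$ is a bijection of cells that preserves degree, as $U$ ranges over the degree-$d$ cells of $[G]$ the images $\phi\pr(U)$ range over exactly the degree-$d$ cells of $[G\pr]$; hence the sum equals $\sum_{\deg(U\pr)=d}I_{U\pr}=I_d\pr$, as required.

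I expect the main obstacle to be precisely the scalar issue in the second paragraph: the naive argument ``$\phi(D\circ I_U)=\deg(U)\,I_{\phi\pr(U)}$'' tacitly uses $\mathbb{C}$-linearity of $\phi$, which is not part of the definition of a weak isomorphism, and it is the integer-multiple reformulation that makes the step legitimate while still transporting the degree information across $\phi$.
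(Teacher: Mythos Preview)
Your argument is correct. The paper's own proof is shorter and uses a slightly different carrier for the degree information: instead of forming $D=(AJ)\circ I$, it looks at the diagonal of $A^2$, writing
\[
A^2=\sum_d d\,I_d+\sum_{R\text{ non-diagonal}}a_R R,
\qquad
(A\pr)^2=\sum_d d\,I_d\pr+\sum_{R\pr\text{ non-diagonal}}a_{R\pr} R\pr,
\]
and then, using $\phi(A^2)=(A\pr)^2$ together with the fact that $\phi$ sends diagonal matrices to diagonal matrices, it reads off $\phi(I_d)=I_d\pr$ by comparing the diagonal parts. Since $(A^2)\circ I=(AJ)\circ I$, the two proofs are really extracting the same diagonal matrix $D$; the difference is packaging.

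What your version buys is rigor at exactly the point where the paper is terse. The paper's one-line conclusion ``we must have $\phi(I_d)=I_d\pr$'' tacitly matches coefficients, which presupposes both $\mathbb{C}$-linearity of $\phi$ and a descent to the basis relations (the $I_d$ are unions of cell identities, not basis relations themselves). You make both steps explicit: you descend to cells via Lemma~\ref{degreecell}, and your integer-multiple reformulation legitimately sidesteps the question of whether a weak isomorphism, as defined here only to preserve addition, must preserve complex scalars. That caution is well placed given the paper's bare definition, even though in the standard literature weak isomorphisms are taken to be $\mathbb{C}$-algebra isomorphisms.
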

\begin{proof}  Letting $A$ and $A\pr$ be the adjacency matrices of $G$ and $G\pr$, we can write 
$$A^2=\sum_d{dI_d}+\sum_{R\text{ non-diagonal}}a_RR$$
and 
$$(A\pr)^2=\sum_d{dI_d\pr}+\sum_{R\pr\text{ non-diagonal}}a_R\pr R\pr$$
Then, since $\phi(A^2)=\phi((A\pr)^2)$ and $\phi$ takes diagonal matrices to diagonal matrices we must have $\phi(I_d)=I_d\pr$.
\end{proof}

We now broaden these definitions to take into account $k$-extensions:

\begin{definition} Let $G$ and $G\pr$ be graphs with adjacency matrices $A$ and $A\pr$.  Then $\phi$ is a $k$-equivalence if
\begin{enumerate}[(i)]
\item It is an equivalence from $G$ to $G\pr$.
\item There exists a weak isomorphism $\widehat\phi\colon \widehat{[G]}^{(k)}\to \widehat{[G\pr]}^{(k)}$ such that
$$\widehat\phi\mid_{[G]^k}=\phi^k\qquad and \qquad\widehat\phi\mid_{\Z(\sym{V},V^k)}=I$$
\end{enumerate}
\end{definition}


Clearly, 1-equivalence corresponds with our existing definition of graph equivalence.  A version of the following lemma appears in \cite{Ponomarenko:2009vn} and gives a useful description of the action of a $k$-equivalence.

\begin{lemma}\label{cylindric2} Let $W$ and $W\pr$ be cellular algebras containing $G$ and $G\pr$.  Let $\phi:W\to W\pr$ be a $k$-equivalence from $G$ to $G\pr$.  Let $S\subseteq \R^*$ be a set of relations of $W$.  Then,
$$\widehat\phi(\text{Cyl}_S)=\text{Cyl}_{\phi(S)}$$
where $\phi(S)=\{\phi(R_{i,j})\colon R_{i,j}\in S\}$.
\end{lemma}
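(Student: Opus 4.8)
The plan is to write $\text{Cyl}_S$ as a combination of matrices on which the two defining clauses of a $k$-equivalence act transparently, and then to push $\widehat\phi$ through that combination using the fact that $\widehat\phi$ is a weak isomorphism, hence preserves both Hadamard and matrix products. First I would decompose the cylindric relation into single coordinate-pair factors: for $1\le i,j\le k$ and $R\in\R^*$, define the matrix $M_{i,j}^R$ on $V^k$ by $M_{i,j}^R(\overline x,\overline y)=R(x_i,y_j)$. Then by definition $\text{Cyl}_S=\bigcirc_{i,j}M_{i,j}^{R_{i,j}}$, so it suffices to compute $\widehat\phi(M_{i,j}^R)$ for each factor.

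Each factor I would in turn realize as a matrix product of one element of $W^k$ and one element of $\Z(\sym{V},V^k)$. Write $T_j^R=J\otimes\cdots\otimes R\otimes\cdots\otimes J$ with $R$ in position $j$, so that $T_j^R(\overline x,\overline y)=R(x_j,y_j)$, and let $P_{ij}\in\Z(\sym{V},V^k)$ be the permutation matrix of the coordinate transposition $(i\,j)$, which commutes with the diagonal $\sym{V}$-action and so lies in the centralizer algebra (for $i=j$ take $P_{ii}=I$). A direct computation gives $P_{ij}\cdot T_j^R=M_{i,j}^R$. Since $R,J\in W$ we have $T_j^R\in W^k$, and $P_{ij}\in\Z(\sym{V},V^k)$, so both factors lie in $\extwk$; this is essentially the construction behind Lemma \ref{cylindric1}. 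Now I would apply $\widehat\phi$: preservation of Hadamard products gives $\widehat\phi(\text{Cyl}_S)=\bigcirc_{i,j}\widehat\phi(M_{i,j}^{R_{i,j}})$, and preservation of matrix products gives $\widehat\phi(M_{i,j}^R)=\widehat\phi(P_{ij})\,\widehat\phi(T_j^R)$. The two restriction conditions then finish each factor. On $W^k$ we have $\widehat\phi=\phi^k=\phi\otimes\cdots\otimes\phi$, so $\widehat\phi(T_j^R)=\phi(J)\otimes\cdots\otimes\phi(R)\otimes\cdots\otimes\phi(J)=J\pr\otimes\cdots\otimes\phi(R)\otimes\cdots\otimes J\pr$, using $\phi(J)=J\pr$ (immediate, since $\phi$ permutes basis relations). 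On $\Z(\sym{V},V^k)$ we have $\widehat\phi=I$, so $\widehat\phi(P_{ij})$ is the coordinate transposition $(i\,j)$ over $V\pr$. Multiplying back together yields $\widehat\phi(M_{i,j}^R)=M_{i,j}^{\phi(R)}$ on $(V\pr)^k$, and the Hadamard product over all $(i,j)$ gives $\widehat\phi(\text{Cyl}_S)=\bigcirc_{i,j}M_{i,j}^{\phi(R_{i,j})}=\text{Cyl}_{\phi(S)}$, as required.

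The step I expect to require the most care is the clause $\widehat\phi\mid_{\Z(\sym{V},V^k)}=I$: since $\widehat\phi$ sends matrices on $V^k$ to matrices on $(V\pr)^k$, the equality "$=I$" must be read through the canonical identification of $\Z(\sym{V},V^k)$ with $\Z(\sym{V\pr},(V\pr)^k)$, under which each basis relation is labelled by the partition of the $2k$ positions recording coincidences among $x_1,\dots,x_k,y_1,\dots,y_k$. I would check that $P_{ij}$ is a sum of such basis relations and that $\widehat\phi$ sends each to the identically-labelled relation over $V\pr$, so that $\widehat\phi(P_{ij})$ is genuinely the coordinate transposition on $(V\pr)^k$; this relies on $\size{V}=\size{V\pr}$, which follows from $\phi$ being an equivalence (Lemma \ref{celltocell} applied to the full vertex set). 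Everything else is a routine consequence of $\widehat\phi$ respecting addition, Hadamard product, and matrix product.
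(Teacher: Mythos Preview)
The paper does not supply its own proof of Lemma~\ref{cylindric2}; it simply attributes the result to \cite{Ponomarenko:2009vn} (and, for the underlying Lemma~\ref{cylindric1}, to \cite{Evdokimov:1999qf}). So there is nothing in the paper to compare against directly.

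That said, your argument is correct and is essentially the standard one. You factor $\text{Cyl}_S$ as a Hadamard product of single-coordinate matrices $M_{i,j}^{R_{i,j}}$, realize each $M_{i,j}^{R}$ as a matrix product $P_{ij}\cdot T_j^{R}$ with $T_j^{R}\in W^k$ and $P_{ij}\in\Z(\sym{V},V^k)$, and then push $\widehat\phi$ through using that it is a weak isomorphism together with the two restriction clauses in the definition of $k$-equivalence. The computation $P_{ij}\cdot T_j^{R}=M_{i,j}^{R}$ is straightforward (the coordinate transposition commutes with the diagonal $\sym{V}$-action, so $P_{ij}$ really does lie in the centralizer), and the rest is bookkeeping. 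Your caveat about reading $\widehat\phi\mid_{\Z(\sym{V},V^k)}=I$ through the canonical identification of the two centralizer algebras, labelled by partitions of the $2k$ coordinate positions and relying on $\size{V}=\size{V\pr}$, is exactly the one subtlety that needs to be said out loud; once that is granted, $\widehat\phi(P_{ij})$ is the corresponding transposition on $(V\pr)^k$ and the proof goes through.
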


\section{A Discrete Time Quantum Walk Invariant}\label{emms}

Given an undirected graph $G$, we can define a classical random walk using the stochastic matrix $B$ where
$$B_{uv}=\begin{cases}\frac{1}{\deg(v)}&uv\in E(G)\\0&o.w.\end{cases}$$

To define a quantum analogue, we construct a unitary transition matrix $U$.  First, we will define the digraph $D=D(G)$ by replacing each edge $uv$ of $G$ by the directed arcs $uv$ and $vu$.  The quantum walk takes place on the arcs of $D$.  Its entries of $U=U(G)$ are defined as follows:
$$U_{wx,uv}=\begin{cases}\frac{2}{\deg(v)}&v=w,u\neq x\\\frac{2}{\deg(v)}-1&v=w,u=x\\0&o.w.\end{cases}$$

We now define the {\em positive support} of a matrix as follows:
$$S^+(M)_{uv}=\begin{cases}1&M_{uv}>0\\0&o.w.\end{cases}$$
In the papers of Godsil and Guo, and of Emms et al., they consider the matrix
$$S^+(U^3)$$
In particular, they consider the spectrum of this matrix.  Clearly the spectrum is a graph invariant.  Emms et al. conjecture that it is strong enough to distinguish any pair of strongly regular graphs.

In this section, we will show that the transition matrix $U$ is contained in the 2-extension of a graph.  This will lead us to the main theorem of the section:

\emmscospec*

\subsection{The matrix $S^+(U^k)$ in the cellular algebra $\widehat{W}^{(2)}$}\label{unit}

First, we would like to show that $S^+(U^k)$ is contained in the 2-extension $\widehat{W}^{(2)}$. 
This tells us that the 2-extension captures the structure encoded in $S^+(U^k)$.  Moreover, we will show that any 2-equivalence must respect the operator $S^+(U^k)$.  This will lead us to our main theorem---that a pair of 2-equivalent graphs are not distinguished by the spectrum of $S^+(U^k)$ for any $k$.

\begin{lemma} Let $\phi$ be a 2-equivalence from $G$ to $G\pr$.  Let $W$ and $W\pr$ be cellular algebras containing $G$ and $G\pr$ respectively, and $U=U(G)$ and $\overline U=U(G\pr)$ be their corresponding transition matrices.  Then, for all $k\in\mathbb{Z}^+$,
\begin{enumerate}[(i)]
\item $U^k\in\widehat{W}^{(2)}$ and $\overline U^k\in\widehat{W\pr}^{(2)}$.
\item $S^+(U^k)$ and $S^+(\overline U^k)\in\widehat{W\pr}^{(2)}$.
\item $\widehat{\phi}(S^+(U^k)=S^+(\overline U^k)$
\end{enumerate}
\end{lemma}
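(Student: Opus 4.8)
The three statements build on one another, so I would prove them in order, with (i) doing most of the real work. The key observation is that each entry $U_{wx,uv}$ of the transition matrix depends only on the relation between the ordered pair $(w,x)$ and the ordered pair $(u,v)$---specifically on whether $v=w$, whether $u=x$, and on $\deg(v)$. Since the walk lives on the arcs of $D(G)$, it is natural to index $U$ by elements of $V^2$ (ordered pairs of vertices), so that $U\in\text{Mat}_{V^2}$. The plan is to realize $U$ as a relation-indexed combination of cylindric relations sitting inside the $2$-extension $\extwk$ with $k=2$.

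\begin{proof}[Proof plan]
First I would show $U\in\widehat{W}^{(2)}$ by writing $U$ as a sum of scalar multiples of matrices already known to lie in the $2$-extension. Reading the definition of $U$, the arc $uv$ must be an edge, so $(u,v)\in A$; the arc $wx$ must be an edge, so $(w,x)\in A$; the ``$v=w$'' condition forces $x_2=y_1$ in the cylindric indexing, and the ``$u=x$ versus $u\neq x$'' split distinguishes the two nonzero values. Each of these constraints is a condition of the form ``$(x_i,y_j)$ lies in some relation $R_{i,j}\in\R^*$,'' so by Lemma~\ref{cylindric1} the corresponding cylindric relation $\text{Cyl}_S$ belongs to $\extwk$. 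The degree factor $\tfrac{2}{\deg(v)}$ is constant on each cell by Lemma~\ref{degreecell}, since $I_d\in[G]\subseteq W$, so it contributes a fixed scalar on each piece rather than breaking membership. Assembling the at most two relevant cylindric relations (one for $u\neq x$, one for $u=x$) with the appropriate scalar coefficients expresses $U$ as an element of $\extwk$. Then $U^k\in\extwk$ follows because a cellular algebra is closed under matrix multiplication. The same argument applied to $G\pr$ gives $\overline U^k\in\widehat{W\pr}^{(2)}$, establishing (i).

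For (ii), I would invoke closure of a cellular algebra under Hadamard multiplication together with the fact that $\extwk$ has a $0$--$1$ basis $\R$. Since $S^+(M)$ is the support of $M$, it is the sum of exactly those basis relations $R$ on which $M$ is not identically zero; equivalently, one can recover $S^+(U^k)$ from $U^k\in\extwk$ by decomposing $U^k$ in the basis $\R$ and collecting the relations with nonzero coefficient. One subtlety worth flagging is that $S^+$ keeps only \emph{strictly positive} entries, so I should check that within each basis relation the entries of $U^k$ are either all positive, all negative, or all zero---this is what makes $S^+(U^k)$ a genuine sum of basis relations rather than a finer object. This constancy on basis relations is exactly the structural content of the $2$-extension, and it is the step I expect to require the most care.

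Statement (iii) then follows from (i), (ii), and Lemma~\ref{cylindric2}. The map $\widehat\phi$ sends $\text{Cyl}_S$ to $\text{Cyl}_{\phi(S)}$, and since $U$ is built from cylindric relations whose defining relations $R_{i,j}$ (the edge relations and diagonal relations) are carried by $\phi$ to the corresponding relations of $G\pr$, we get $\widehat\phi(U)=\overline U$. Because $\widehat\phi$ is a weak isomorphism it preserves matrix multiplication, so $\widehat\phi(U^k)=\overline U^k$, and because it preserves Hadamard multiplication and sends the $0$--$1$ basis of $\extwk$ to that of $\widehat{W\pr}^{(2)}$ bijectively, it commutes with taking positive support. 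Here I must use that $\phi(I_d)=I_d\pr$, proved in the lemma above, so that the degree-dependent scalars match up under $\phi$ and the two transition matrices correspond exactly. Combining these gives $\widehat\phi(S^+(U^k))=S^+(\overline U^k)$, completing the proof.
\end{proof}
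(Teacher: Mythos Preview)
Your proposal is correct and follows essentially the same route as the paper: express $U$ as a linear combination of cylindric relations (built from $I$, $J$, $J-I$) weighted by degree via the cells $I_d$, conclude $U^k\in\widehat W^{(2)}$ by closure under multiplication, read off $S^+(U^k)$ from the $0$--$1$ basis expansion, and transport everything through $\widehat\phi$ using Lemma~\ref{cylindric2} together with $\phi(I_d)=I_d'$. The ``subtlety'' you flag in (ii) is in fact automatic once (i) is established: writing $U^k=\sum_{R\in\R}a_R R$ with the $R$ disjoint $0$--$1$ matrices forces every entry of $U^k$ covered by a given $R$ to equal exactly $a_R$, so constancy on basis relations needs no separate argument.
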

\begin{proof}
Define the sets of relations 
\begin{flalign*}
R_{1,1}=&J\\
R_{1,2}=&I\\
R_{2,1}=&J-I\\
R_{2,2}=&J
\end{flalign*}
and set $S=\{R_{1,1},R_{1,2},R_{2,1},R_{2,2}\}$.  Further define
\begin{flalign*}
R\pr_{1,1}=&J\\
R\pr_{1,2}=&I\\
R\pr_{2,1}=&I\\
R\pr_{2,2}=&J
\end{flalign*}

and set $S\pr=\{R\pr_{1,1},R\pr_{1,2},R\pr_{2,1},R\pr_{2,2}\}$.  Then, by Lemma \ref{cylindric1}, $Cyl_{S}, Cyl_{S\pr}\in\extwk$.  Now, in the notation of Lemma \ref{typet}, set ${}^dL_{1,2}=I_dA$.  Then, if $T_d=\{{}^dL_{1,2}\}$, then $I_{T_d}$ is the identity on pairs of vertices that are adjacent in $G$, where the first vertex is of degree $d$.  Also, note that $\phi(I_dA)=I_d\pr A\pr$, so $\widehat\phi$ maps $I_{T_d}$ to $I_{T_d\pr}\in\widehat{W\pr}^{(2)}$---the identity relation on edges whose first vertex has degree $d$.  We can now express the transition matrix $U=U(G)$ as 
$$U(G)=\sum_dI_{T_d}\Bigl(\frac{2}{d}Cyl_S+(\frac{2}{d}-1)Cyl_{S\pr}\Bigr)$$

Therefore, $U\in\widehat{W}^{(2)}$ and therefore $U^k\in\widehat{W}^{(2)}$ for all $k$.  The proof is identical for $\overline U$ and $\widehat{W\pr}^{(2)}$

As a result of (i), we can write any $U^k$ as 

$$U^k=\sum_{R\in\mathcal R}a_RR$$

Now, let $\mathcal R_+=\{R\in \mathcal R:\ts a_R>0\}$.  Then,
$$S^+(U^k)=\sum_{R\in\mathcal R_+}R$$
So, $S^+(U^k)\in \widehat{W}^{(2)}$ for all $k$.  The proof for $S^+(\overline U^k)$ is identical.

Note that Applying Lemma \ref{cylindric2}, we see that 
\begin{flalign*}\widehat{\phi}(U)=&\widehat{\phi}\left(\sum_dI_{T_d}\Bigl(\frac{2}{d}Cyl_S+(\frac{2}{d}-1)Cyl_{S\pr}\Bigr)\right)\\
=&\sum_dI_{T_d\pr}\Bigl(\frac{2}{d}Cyl_{\phi(S)}+(\frac{2}{d}-1)Cyl_{\phi(S\pr)}\Bigr)
=\overline{U}
\end{flalign*}

Then, letting $\mathcal R$ be the basis of $\widehat{W}^{(2)}$, we can write
$$U=\sum_{R\in\mathcal R}a_RR$$
and
$$\overline U=\widehat{\phi}(U)=\sum_{R\in\mathcal R}a_R\widehat\phi(R)$$

Let $\mathcal{R}_+=\{R\in\mathcal R:\ts a_R>0\}$.  Then,
\begin{flalign*}
S^+(\overline U)=&\sum_{R\in\mathcal R_+}\widehat\phi(R)\\
=&\widehat\phi\left(\sum_{R\in\mathcal R_+}R\right)\\
=&\widehat\phi(S^+(U))
\end{flalign*}

proving (iii).
\end{proof}

This leads us directly to the main result of this section:

\emmscospec*
%
%
%

\section{A Continuous Time Quantum Walk Invariant}\label{gamble}

In a 2010 paper, Gamble, Friesen, Zhou, Joynt and Coppersmith \cite{John-King-Gamble:2010lq} consider the application of a multi-paritcle quantum walks to the graph isomorphism problem.   They attempt exploit the interaction between the particles to distinguish graphs that can not be distinguished by classical random walks or single particle quantum walks.  In particular, they examine (a) two non-interacting Bosons; (b) two non-interacting Fermions; and (c) two interacting Bosons.  In this section, we focus on the case of interacting Bosons.

A graph $G=(V,E)$ is associated with an Hamiltonian $H$; this Hamiltonian will depend on the dynamics and number of particles chosen.  The corresponding unitary operator is then given by
$$U(t)=e^{-itH}$$

The {\em Green's functions} associated with this graph are the values $$\mathcal{G}_t(i,j)=\bra{j}U(t)\ket{i}$$ where $i$ and $j$ run over an appropriate basis (in the case of \cite{John-King-Gamble:2010lq}, this is the two-particle basis).  We say that two graphs are distinguishable if their sets of Green's functions differ.

Gamble et al. prove that non-interacting Bosons and Fermions fail to distinguish pairs of non-isomorphic strongly regular graphs with the same parameters.  In order to evaluate the effectiveness of interacting Bosons, they consider all tabulated pairs of non-isomorphic strongly regular graphs with up to 64 vertices.  They find that the interacting 2-Boson walk does indeed distinguish all the pairs of strongly regular graphs they considered.  They go on to ask if, for a high enough value of $k$, an interacting $k$-Boson quantum walk could distinguish all pairs of non-isomorphic graphs.  In this section, we show that pairs of $k$-equivalent graphs are not distinguished by $k$-Boson quantum walks, answering their question in the negative.

\subsection{Unitary Evolution and Cellular Algebras}\label{unit}

Let $H$ be a Hamiltonian and $W=(V, \R)$ a cellular algebra containing $H$.  Then, unitary $U$ corresponding to $H$ can be written as 

$$U=e^{-itH}=\sum_{j=0}^\infty\frac{(-itH)^j}{j!}$$

We will now take advantage of the fact that $U$ lies within $W$ to express the values of the Green's function in a convenient way.  Since $H\in W$, we can write each $H^j/j!$ as a linear combination of basis relations:

$$\frac{H^j}{j!}=\sum_{R\in\R}p_R(j)\cdot R$$

This gives us

$$U=e^{-itH}=\sum_{R\in\R}\left[R\cdot\sum_{j=0}^\infty p_R(j)\cdot(-it)^j\right]$$

Since each $R\in\R$ is a 0-1 matrix and the Green's functions are simply the entries of the matrix $U$, the values of the Green function can be expressed as follows:
$$x_R(t)=\sum_{j=0}^\infty p_R(j)\cdot(-it)^j$$
each with multiplicity 
$$m_R=\text{sum}(R)=\text{tr}(RR^T)$$

The following lemma makes use of these expression for Green's functions.  It shows that these expressions are preserved by a weak isomorphism.  This forms the basis for our proof that $k$-equivalent graphs are indistinguishable.

\begin{lemma}\label{indistinguishable}Let $H$ and $H\pr$ be two Hamiltonians contained in cellular algebras $W$ and $W\pr$ respectively.  Furthermore, let $\phi:W\to W\pr$ be a weak isomorphism such that $\phi(H)=H\pr$. Then, the Green's functions for the corresponding unitaries $U(t)$ and $U\pr(t)$ take on the same values with the same multiplicities.\end{lemma}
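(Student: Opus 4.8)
The plan is to exploit the expression for Green's functions developed just before the lemma statement, and to show that a weak isomorphism $\phi$ with $\phi(H)=H\pr$ sends each basis relation $R$ of $W$ to a basis relation $\phi(R)$ of $W\pr$ carrying an \emph{identical} Green's function value and an \emph{identical} multiplicity. Since the set of Green's functions is exactly the multiset $\{x_R(t)\}$ indexed by $\R$ with multiplicities $m_R$, matching these term by term across $\phi$ yields the claim.

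First I would unpack the polynomials $p_R(j)$. Writing $H^j/j! = \sum_{R\in\R} p_R(j)\cdot R$, I would apply $\phi$ to both sides. Because $\phi$ preserves matrix multiplication and addition (it is a weak isomorphism) and $\phi(H)=H\pr$, we get $\phi(H^j/j!) = (H\pr)^j/j! = \sum_{R\in\R} p_R(j)\cdot \phi(R)$. On the other hand, expanding $(H\pr)^j/j!$ directly in the basis $\R\pr$ of $W\pr$ gives $(H\pr)^j/j! = \sum_{R\pr\in\R\pr} p\pr_{R\pr}(j)\cdot R\pr$. Since $\phi$ is a bijection from $\R$ onto $\R\pr$ and the $0$–$1$ basis is unique, comparing coefficients forces $p\pr_{\phi(R)}(j) = p_R(j)$ for every $R$ and every $j$. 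Summing the defining series then gives $x_{\phi(R)}(t) = \sum_{j} p\pr_{\phi(R)}(j)(-it)^j = \sum_j p_R(j)(-it)^j = x_R(t)$, so corresponding relations carry the same Green's function value.

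Next I would handle the multiplicities. The multiplicity attached to $R$ is $m_R = \operatorname{sum}(R) = \tr(RR^\dagger)$, the number of nonzero entries of $R$. I would observe that $\phi$ preserves Hadamard multiplication and complex conjugation, so $\phi(R)\circ\phi(R)^\dagger = \phi(R\circ R^\dagger) = \phi(R)$ (as $R$ is a $0$–$1$ matrix, $R\circ R^\dagger = R$), and then invoke Lemma \ref{treq}, which guarantees $\tr(\phi(R)) = \tr(R)$ for trace and, combined with the trace-preservation applied to $R R^\dagger$, gives $m_{\phi(R)} = m_R$. Concretely, $\tr(\phi(R)\phi(R)^\dagger) = \tr(\phi(RR^\dagger)) = \tr(RR^\dagger)$ by Lemma \ref{treq}, so the multiplicities match.

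Combining the two pieces: the multiset of (value, multiplicity) pairs $\{(x_R(t), m_R) : R\in\R\}$ for $U(t)$ maps bijectively under $\phi$ to the multiset $\{(x_{\phi(R)}(t), m_{\phi(R)}) : R\in\R\} = \{(x_{R\pr}(t), m_{R\pr}) : R\pr\in\R\pr\}$ for $U\pr(t)$, with equal entries in each coordinate. Hence the Green's functions take the same values with the same multiplicities. The main obstacle I anticipate is the bookkeeping that ensures the coefficient-matching is legitimate, namely verifying that the decomposition of $H^j/j!$ in the unique $0$–$1$ basis really does transport cleanly under $\phi$ — this rests on $\phi$ being multiplicative and basis-preserving, and on the uniqueness of the basis $\R$, all of which are available, so the difficulty is one of careful justification rather than of any substantive new idea.
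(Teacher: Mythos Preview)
Your proposal is correct and follows essentially the same approach as the paper: transport the coefficients $p_R(j)$ through $\phi$ to match the Green's-function values, and invoke Lemma~\ref{treq} on $RR^\dagger$ to match multiplicities. One minor slip: the aside $R\circ R^\dagger = R$ is false for non-symmetric $0$--$1$ matrices, but it is irrelevant since your ``Concretely'' line is the actual argument and does not depend on it.
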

\begin{proof}    Define $p\pr_R(j)$, $x\pr_R(t)$ and $m\pr_R$ as above.  Since $\phi$ is a weak isomorphism,
$$p_R(j)=p_{\phi(R)}\pr(j)$$
and therefore
$$x_R(t)=x\pr_{\phi(R)}(t)$$
Corollary \ref{treq} tells us that
$$m_R=\text{tr}(RR^T)=\text{tr}(\phi(RR^T))=m\pr_{\phi(R)}$$

Therefore, the Green functions take on the same values with the same multiplicities.
\end{proof}

\subsection{Interacting 2-Boson Walks and 2-Equivalence}
\label{2boson}

We will first consider the case of two interacting Bosons.  We will extend this to to $k$ particles in the following section, but we include the 2-Boson case separately as it directly addresses \cite{John-King-Gamble:2010lq}.  In \cite{John-King-Gamble:2010lq}, the Hamiltonian for a two-Boson quantum walk is given by:

$$H_{2B}=-\frac{1}{2}(I+S)A^{\oplus2}+UR$$

where $U$ is a constant energy cost and

\begin{flalign*}
S=&\sum_{i,j}\ketbra{ij}{ji},\\
R=&\sum_i\ketbra{ii}{ii},\\
A^{\oplus n}=&(\overbrace{A\otimes I\otimes...\otimes I}^n)+...+(I\otimes I\otimes...\otimes A)
\end{flalign*}

The following lemma is a direct consequence of the definition of 2-extension, as well as Lemma \ref{cylindric2} regarding cylindric relations:
\begin{lemma} \label{HtoH} If $\phi$ is a 2-equivalence from $G$ to $G\pr$ with corresponding 2-Boson Hamiltonians $H_{2B}$ and $H\pr_{2B}$, then
\begin{enumerate}[(i)]
\item $\widehat\phi(S)=S$
\item $\widehat\phi(A^{\oplus2})=(A\pr)^{\oplus2}$
\item$\widehat\phi(R)=R$
\end{enumerate}
and therefore $\widehat{\phi}(H_{2B})=H\pr_{2B}$.

\end{lemma}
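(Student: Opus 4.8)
The plan is to decompose $H_{2B}$ into its three building blocks $S$, $A^{\oplus2}$, and $R$, and to track each one separately under $\widehat\phi$ by identifying which part of the 2-extension $\widehat{[G]}^{(2)}=[[G]^2,\Z(\sym{V},V^2)]$ it inhabits. The defining properties of a 2-equivalence are that $\widehat\phi$ restricts to $\phi^2=\phi\otimes\phi$ on the tensor-square part $[G]^2$ and to the identity on the centralizer algebra $\Z(\sym{V},V^2)$. So the entire argument reduces to placing each of the three operators into one of these two regimes and reading off the action; parts (i)--(iii) then combine formally to give the statement about $H_{2B}$.

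For parts (i) and (iii), I would show that both $S=\sum_{i,j}\ketbra{ij}{ji}$ and $R=\sum_i\ketbra{ii}{ii}$ lie in $\Z(\sym{V},V^2)$. Written in coordinates, their entries are $S_{(i,j),(k,l)}=[i=l][j=k]$ and $R_{(i,j),(k,l)}=[i=j=k=l]$. Since $\sym{V}$ acts diagonally on $V^2$ by $g\cdot(i,j)=(g(i),g(j))$, relabeling all four indices by any $g$ leaves both entry patterns unchanged; hence both operators are fixed by the action and belong to the centralizer algebra. The condition $\widehat\phi\mid_{\Z(\sym{V},V^2)}=I$ then gives $\widehat\phi(S)=S$ and $\widehat\phi(R)=R$ at once.

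For part (ii), I would write $A^{\oplus2}=A\otimes I+I\otimes A$ and note that each summand lies in $[G]^2=[G]\otimes[G]$, since $A\in[G]$ by definition of the cellular closure and $I\in[G]$ because every cellular algebra contains the identity. On $[G]^2$ the map $\widehat\phi$ acts as $\phi\otimes\phi$, so using $\phi(A)=A\pr$ (as $\phi$ is an equivalence) together with $\phi(I)=I$, I obtain $\widehat\phi(A\otimes I)=A\pr\otimes I$ and $\widehat\phi(I\otimes A)=I\otimes A\pr$; summing yields $\widehat\phi(A^{\oplus2})=(A\pr)^{\oplus2}$.

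Finally, to assemble the ``therefore'' clause, I would use that $\widehat\phi$ is a weak isomorphism and hence preserves sums and matrix products while fixing $I$. Applying $\widehat\phi$ to $H_{2B}=-\frac{1}{2}(I+S)A^{\oplus2}+UR$ distributes over the expression, and substituting the three computed images produces $-\frac{1}{2}(I+S)(A\pr)^{\oplus2}+UR=H\pr_{2B}$. The only genuinely substantive step is the membership claim $S,R\in\Z(\sym{V},V^2)$; everything else is formal bookkeeping. I therefore expect the main (and fairly mild) obstacle to be correctly matching the entrywise $\sym{V}$-action on $V^2$ to the index patterns of $S$ and $R$ when verifying their invariance, rather than any deeper difficulty.
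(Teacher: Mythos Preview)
Your proof is correct. The approach for parts (i) and (iii) differs from the paper's: you place $S$ and $R$ directly inside the centralizer algebra $\Z(\sym{V},V^2)$ and then invoke the defining condition $\widehat\phi\mid_{\Z(\sym{V},V^2)}=I$ of a $2$-equivalence. The paper instead (deferring to the general Lemma~\ref{HtoHk}) realizes $S$ and each $R_i$ as cylindric relations built from $\{I,J\}$, respectively $\{I,J-I\}$, and then applies Lemma~\ref{cylindric2} together with $\phi(I)=I$ and $\phi(J)=J$ to conclude they are fixed by $\widehat\phi$. Your route is slightly more elementary, needing only the definition of $k$-equivalence rather than the cylindric-relation machinery; the paper's route is more uniform and makes the passage to general $k$ immediate. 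For part (ii) and the final assembly your argument coincides with the paper's.
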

\begin{proof} See the proof of Lemma \ref{HtoHk} for a more general proof.\end{proof}

Combining Lemma \ref{indistinguishable} and Lemma \ref{HtoH}, we arrive at the following:

\begin{theorem} \label{2bos} If $G$ and $G\pr$ are 2-equivalent graphs, then they are not distinguished by the interacting 2-Boson walk.\end{theorem}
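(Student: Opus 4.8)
The plan is to derive the theorem as an immediate consequence of Lemma~\ref{HtoH} and Lemma~\ref{indistinguishable}; the work consists entirely of checking that the hypotheses of Lemma~\ref{indistinguishable} hold when the ambient cellular algebras are taken to be the $2$-extensions. Write $W=[G]$ and $W\pr=[G\pr]$. By condition (ii) in the definition of a $2$-equivalence, $\widehat\phi\colon\widehat{W}^{(2)}\to\widehat{W\pr}^{(2)}$ is a weak isomorphism, so it is a legitimate choice for the map appearing in Lemma~\ref{indistinguishable}, provided the two Hamiltonians lie in these algebras.

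First I would confirm that $H_{2B}\in\widehat{W}^{(2)}$. The swap $S=\sum_{i,j}\ketbra{ij}{ji}$ and the collision term $R=\sum_i\ketbra{ii}{ii}$ are invariant under the diagonal action of $\sym{V}$ on $V^2$, so both lie in the centralizer algebra $\Z(\sym{V},V^2)\subseteq\widehat{W}^{(2)}$, while $A^{\oplus2}=A\otimes I+I\otimes A$ lies in $W^2\subseteq\widehat{W}^{(2)}$. Since $\widehat{W}^{(2)}$ is an algebra containing $I$ and is closed under linear combination and product, the Hamiltonian $H_{2B}=-\frac{1}{2}(I+S)A^{\oplus2}+UR$ belongs to $\widehat{W}^{(2)}$; the same argument places $H\pr_{2B}$ in $\widehat{W\pr}^{(2)}$.

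Next, Lemma~\ref{HtoH} supplies the remaining hypothesis $\widehat\phi(H_{2B})=H\pr_{2B}$. At this point the triple $\widehat{W}^{(2)}$, $\widehat{W\pr}^{(2)}$, $\widehat\phi$ together with $H_{2B}$ and $H\pr_{2B}$ satisfies every hypothesis of Lemma~\ref{indistinguishable}, so that lemma applies verbatim: the Green's functions of $U(t)=e^{-itH_{2B}}$ and $U\pr(t)=e^{-itH\pr_{2B}}$ take the same values with the same multiplicities. Because the invariant of Gamble et al. is exactly this multiset of Green's function values, $G$ and $G\pr$ are not distinguished by the interacting $2$-Boson walk.

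Since both auxiliary lemmas may be assumed, the theorem proper is a short corollary, and the only real content sits in Lemma~\ref{HtoH}. The potential obstacle there is the identity $\widehat\phi(A^{\oplus2})=(A\pr)^{\oplus2}$: the two summands must be rewritten as cylindric relations built from the adjacency relation $A$ and the identity $I$, after which Lemma~\ref{cylindric2} and $\phi(A)=A\pr$ give the claim, while $\widehat\phi(S)=S$ and $\widehat\phi(R)=R$ follow from $S,R\in\Z(\sym{V},V^2)$, on which $\widehat\phi$ acts as the identity by the definition of $2$-equivalence. This cylindric-relation bookkeeping, carried out in full generality in Lemma~\ref{HtoHk}, is the step requiring the most care.
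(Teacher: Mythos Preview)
Your proposal is correct and matches the paper's own argument: the theorem is stated there as an immediate corollary of Lemma~\ref{HtoH} and Lemma~\ref{indistinguishable}, and your write-up simply makes explicit the membership $H_{2B}\in\widehat{W}^{(2)}$ needed to invoke the latter. One small remark on your closing paragraph: for $\widehat\phi(A^{\oplus2})=(A\pr)^{\oplus2}$ the paper (in the general Lemma~\ref{HtoHk}) appeals directly to the condition $\widehat\phi\mid_{W^k}=\phi^k$ rather than rewriting $A\otimes I$ and $I\otimes A$ as cylindric relations, but either route works.
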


\section{Interacting $k$-Boson Walks and $k$-Equivalence}
\label{kboson}

We will now consider the $k$-boson case.  We will consider each term of the Hamiltonian in turn. First, we replace the term $(I+S)$ from the 2-particle Hamiltonian with $\sum_{\sym{k}}S$.  That is, we will work within the subspace invariant under any permutation of the $k$ particles.  Next we replace the term $A^{\oplus2}$ with $A^{\oplus k}$.  

Finally, we consider the interaction term.  We would like the energy contribution from each site to be a function of the number of particles at that site.  Let $\overline{x}\in V^k$ be a basis state of the $k$-Boson system.  Then, define $v_x$ as the number of particles at vertex $v$ in state $x$ and $V_x=\{v_x:\;v\in V\}$.  Then, we can partition $V^k$ into equivalence classes $X_1,...,X_\ell$ such that $x$ and $y$ are in the same class if and only if $V_x=V_y$.  Let $R_i=\sum_{x\in X_i}\ketbra{x}{x}$.  Finally, to each of the $X_i$, we assign an energy penalty $U_i$.  This gives us the interaction term, $\sum_{i=1}^\ell U_iR_i$

Putting these together, we arrive at our $k$-Boson Hamiltonian:

$$H_{kB}=-\frac{1}{k!}\left(\sum_{\sym{k}}S\right)A^{\oplus k}+\sum_{i=1}^{\ell}U_i R_i$$

Note that this expression allows for a good deal of flexibility in the nature of the interaction between particles.  In particular, it includes the non-interacting case, as well as the Bose-Hubbard model, in which the contribution from each site is proportional to the square of the number of particles at that site.

We are now ready to state the following lemma, which is proved in Appendix A:

\begin{restatable}{lemma}{HtoHk} \label{HtoHk} If $\phi$ is a k-equivalence from $G$ to $G\pr$ with corresponding $k$-Boson Hamiltonians $H_{kB}$ and $H\pr_{kB}$, then
\begin{enumerate}[(i)]
\item $\forall S\in \sym{k},\;\widehat\phi(S)=S$
\item $\widehat\phi(A^{\oplus k})=(A\pr)^{\oplus k}$
\item $\forall i,\;\widehat\phi(R_i)=R_i$
\end{enumerate}
and therefore $\widehat{\phi}(H_{kB})=H_{kB}\pr$.
\end{restatable}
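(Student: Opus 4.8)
The plan is to verify the three claims about how the weak isomorphism $\widehat\phi$ acts on each constituent of $H_{kB}$, and then combine them using the fact that $\widehat\phi$ preserves addition and matrix multiplication to conclude $\widehat\phi(H_{kB})=H_{kB}\pr$. The key observation throughout is that each of the three types of operator---the permutation matrices $S\in\sym{k}$, the spread adjacency operator $A^{\oplus k}$, and the interaction projectors $R_i$---is a cylindric relation or a sum of basis relations lying in $\extwk$, so that Lemma \ref{cylindric2} and the defining properties of a $k$-equivalence dictate its image under $\widehat\phi$.

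For claim (i), each $S\in\sym{k}$ permutes the $k$ coordinates of a basis state $\overline x\in V^k$, so as a matrix it lies in the centralizer algebra $\Z(\sym{V},V^k)$. Since a $k$-equivalence is required to satisfy $\widehat\phi\mid_{\Z(\sym{V},V^k)}=I$, we get $\widehat\phi(S)=S$ immediately. For claim (iii), I would express each projector $R_i=\sum_{x\in X_i}\ketbra{x}{x}$ as an identity relation restricted to a type: the class $X_i$ is defined purely by the multiset of occupation numbers $V_x$, which is a permutation-invariant condition and so determines a union of orbits of $\sym{V}$ acting on $V^k$; hence $R_i\in\Z(\sym{V},V^k)$ as well, and again $\widehat\phi(R_i)=R_i$. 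The cleanest route is to show $R_i$ is fixed entrywise by every $g\in\sym{V}$ since permuting vertices permutes the states within a fixed occupation class.

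For claim (ii), the operator $A^{\oplus k}=\sum_{m=1}^k I\otimes\cdots\otimes A\otimes\cdots\otimes I$ acts as $A$ on the $m$-th coordinate and as the identity on the rest. I would identify the $m$-th summand with a cylindric relation $\text{Cyl}_S$ where the relation set $S=\{R_{i,j}\}$ has $R_{m,m}=A$ (coupling coordinate $m$ on the two sides), $R_{i,i}=I$ for $i\neq m$, and $R_{i,j}=J$ for $i\neq j$; this encodes ``coordinate $m$ moves along an edge, all other coordinates are fixed.'' By Lemma \ref{cylindric2}, $\widehat\phi(\text{Cyl}_S)=\text{Cyl}_{\phi(S)}$, and since $\phi$ is an equivalence with $\phi(A)=A\pr$ and $\phi(I)=I$, $\phi(J)=J$, the image is exactly the corresponding summand of $(A\pr)^{\oplus k}$. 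Summing over $m$ and using additivity of $\widehat\phi$ gives (ii).

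The main obstacle is the bookkeeping in (ii): one must set up the cylindric-relation encoding of each tensor summand carefully, confirming that the product $\prod_{i,j}R_{i,j}(x_i,y_j)$ in Lemma \ref{cylindric1} genuinely reproduces the matrix entries of $I\otimes\cdots\otimes A\otimes\cdots\otimes I$ (the off-diagonal $J$ blocks must impose no constraint, while the diagonal $I$ blocks force $x_i=y_i$ on the untouched coordinates). Once the three images are in hand, the conclusion is purely formal: writing $H_{kB}$ as a linear combination of $S$-products, $A^{\oplus k}$, and the $R_i$ with scalar coefficients, and applying $\widehat\phi$ term by term using that it preserves addition, matrix multiplication, and scalars, yields $\widehat\phi(H_{kB})=H_{kB}\pr$.
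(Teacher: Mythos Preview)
Your proposal is correct, but it organizes the argument differently from the paper, and in an instructive way: you and the paper essentially swap tools between the three claims.

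For (i) and (iii), you observe that the permutation matrices $S$ and the projectors $R_i$ are fixed under the diagonal action of $\sym{V}$ on $V^k$, hence lie in $\Z(\sym{V},V^k)$, and then invoke the defining condition $\widehat\phi\mid_{\Z(\sym{V},V^k)}=I$. The paper instead writes each $S$ as a cylindric relation with blocks in $\{I,J\}$ and each $R_i$ as (a sum of) cylindric relations with blocks in $\{I,J-I\}$, then applies Lemma~\ref{cylindric2} together with $\phi(I)=I$, $\phi(J)=J$. Your route is cleaner here and avoids a minor imprecision in the paper: for $k\geq 3$ a single occupation class $X_i$ is generally a \emph{union} of type classes (e.g.\ ``exactly two coordinates coincide'' splits into $\binom{k}{2}$ patterns), so $R_i$ is a sum of cylindric relations rather than a single one---harmless for the argument, but your centralizer observation sidesteps the bookkeeping entirely.

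For (ii), the roles reverse. You encode each summand $I\otimes\cdots\otimes A\otimes\cdots\otimes I$ as a cylindric relation and push it through Lemma~\ref{cylindric2}; this works, and your check that the $J$ off-diagonal blocks impose no constraint while the diagonal $I$ blocks pin the untouched coordinates is exactly right. The paper takes the shorter path: each such summand already lies in $W^{\otimes k}$, so the condition $\widehat\phi\mid_{W^k}=\phi^k$ with $\phi(A)=A'$, $\phi(I)=I$ gives the image immediately, no cylindric encoding needed.

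Both routes are valid; the paper's is more direct for (ii), yours is more direct for (i) and (iii).
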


Applying Lemma \ref{indistinguishable} gives us the following generalization of Theorem \ref{2bos}:
\kequivkbos*

\section{Conclusion}

We have shown that the quantum walk processes described in \cite{Emms:2005fr} and \cite{John-King-Gamble:2010lq} on a graph $G$ are closely related to the structure of the cellular closure of $G$ and its extensions.  In particular, we show that 2-equivalent graphs are indistinguishable by the method of Emms et al. and $k$-equivalent graphs are indistinguishable by the $k$-Boson method of Gamble et al.

As mentioned above, we can construct pairs of $k$-equivalent graphs for any $k$.  These constructions are outlined in by Ponomarenko and Barghi in \cite{Ponomarenko:2009vn}.  They are based on the well-known CFI graphs constructed by Cai, F\"urer and Immerman \cite{Cai:1992p299}.

These results give rise to several natural questions.  The graph constructions of \cite{Ponomarenko:2009vn} are generally not strongly regular.  Can we find strongly regular examples?  Proving that no such examples exist would be an even more interesting result.  In addition, proving the converse of the results in this paper would be very significant.  That is, can we show that, if a pair of graphs are not $k$-equivalent, the $k$-Boson walks on these graphs display different behaviour?  If they are not 2-equivalent, can they be distinguished by the invariant of Emms et al.?  Answering these questions seems to be very difficult, but could have significant implications for the complexity of the graph isomorphism problem.

\appendix
\section{Proofs of Selected Lemmas}

\degreecell*
\begin{proof} Assume that $u$ and $v$ are in the same cell $C$.  Then there is a basis element $I_C$ in $[G]$.  In this case, we can write
$$A^2=a_CI_C+\sum_{R\neq I_C}a_rR$$
This implies that $A^2_{u,u}=A^2_{v,v}=a_C$.  However, $A^2_{u,u}=\deg{u}$, so this cannot hold if $\deg{u}\neq\deg{v}$.
\end{proof}

\celltocell*
\begin{proof} Let $\R_X$ be the set basis relations of $W$ restricted to the vertex set $X$.  Define $\R\pr_{\phi\pr(X)}$ analogously.  We will first show that $J_X$, the all-ones matrix on the set $X$, is mapped to $J_{\phi\pr(X)}$, the all-ones matrix on the corresponding cell in $W\pr$.  Since, $\phi(I_X)=I_{\phi\pr(X)}$ and for any $R\in\R_X$,
$$I_X\cdot R\cdot I_X=R$$
it follows that 
$$I_{\phi\pr(X)}\cdot\phi(R)\cdot I_{\phi\pr(X)}=\phi(R).$$

Therefore, if $R\in\R_X$, then $\phi(R)\in\R\pr_{\phi\pr(X)}$.  Moreover,

$$\phi(J_X)=\sum_{R\in\R_X}\phi(R)=\sum_{R\in\R\pr_{\phi\pr(X)}}R=J_{\phi\pr(X)}$$

Now, comparing
$$J_X\cdot J_X=\size{X}\cdot J_X$$
and
$$J_{\phi\pr(X)}\cdot J_{\phi\pr(X)}=\size{\phi\pr(X)}\cdot J_{\phi\pr(X)}$$
gives us $\size{X}=\size{\phi\pr(X)}$.

\end{proof}
\treq*
\begin{proof} Let $C$ and $C\pr$ be the basis relations of $W$ and $W\pr$ that sum to the identity.  Then, for each $R\in W$ and $I_X\in C$,
$$R\circ I_X=q_R(X)\cdot I_X$$
for some $q_R(X)\in \mathbb{C}$ and
$$\tr(R)=\sum_{X\in\cel(W)}q_R(X)\cdot\size{X}$$
Since $\phi$ is a weak isomorphism, and applying Lemma \ref{celltocell},
\begin{flalign*}
\tr(\phi(R))=&\sum_{X\in\cel(W)}q_R(X)\cdot\size{\phi\pr(X)}\\
=&\tr(R)
\end{flalign*}
\end{proof}

\cospectral*
\begin{proof}
First, we see that if $f(x)$ is the minimal polynomial of $A$, then
$$\phi(f(A))=f(A\pr)=0$$
The converse is also true: If $g(x)$ is the minimal polynomial of $A\pr$, then 
$$\phi^{-1}(g(A\pr))=g(A)=0$$
So, since both polynomials must be minimal, we have $f=g$ and $A$ has the same eigenvalues as $A\pr$.  Now, we need only verify that these eigenvalues occur with the same multiplicity.

The following argument is presented in \cite{Ponomarenko:2009vn}  Let $\lambda_1,...,\lambda_\ell$ be the distinct eigenvalues of $A$ and $A\pr$ with multiplicities $m_1,...,m_\ell$ and $\overline{m}_1,...,\overline{m}_\ell$ respectively.  Since $\tr(A)=\tr(A\pr)$, we have

$$\sum_j(m_j-\overline{m}_j)\lambda_j^k=0$$

for all $k$.  Taking the values $k=0,...,\size{V}-1$, we get $\size{V}$ equations with unknowns $(m_j-\overline m_j)$.  The resulting matrix is the well known Vandermonde matrix with determinant 
$$\prod_{i\neq j}(\lambda_i-\lambda_j)\neq0$$
Therefore, we have $m_j=\overline m_j$ for all $j$, so $A$ and $A\pr$ are cospectral.
\end{proof}

\typet*
\begin{proof} Setting 
$$R_{i,j}=\begin{cases}L_{i,j}&i<j\\L_{i,j}^T&j>i\\I&i=j\end{cases}$$
and $S=\{R_{i,j}:1\leq i,j\leq k\}$, we see that $I_T=Cyl_S$, so by Lemma \ref{cylindric1}, $I_T\in\extwk$ and the set of points of type $T$ is a union of cells of $\extwk$.
\end{proof}

%

\HtoHk*
\begin{proof} First, we note that any $S\in\sym{k}$ is simply a cylindric relation, with $R_{i,j}\in\{I,J\}$ for all $(i,j)$.  Since $\widehat\phi(I)=I$ and $\widehat\phi(J)=J$, we can apply Lemma \ref{cylindric2} to prove (i).

Similarly, each $R_i$ is a cylindric relation with $R_{i,j}\in \{I,(J-I)\}$, so the same reasoning implies that (iii).

Finally, the definition of $k$-equivalence requires that $\widehat\phi\mid_{W^k}=\phi^k$ where $\phi:W\to W\pr$ is a weak isomorphism such that $\phi(A)=A\pr$.  Therefore, 
\begin{flalign*}
\widehat\phi(A\otimes....\otimes I)=&(A\pr\otimes....\otimes I),\\
\widehat\phi(I\otimes A\otimes....\otimes I)=&(I\otimes A\pr\otimes....\otimes I),\\
&\vdots\\
\widehat\phi(I\otimes...\otimes A)=&(I\otimes...\otimes A\pr)
\end{flalign*}
and $\widehat\phi(A^{\oplus k})=(A\pr)^{\oplus k}$, proving (ii).

Combining (i), (ii) and (iii) gives us $\widehat{\phi}(H_{kB})=H_{kB}\pr$.
\end{proof}

\bibliographystyle{plain}
\bibliography{CellularAlgebraResults}

\end{document}